\newtheorem{theorem}{Theorem}[section]
\theoremstyle{definition}
\newtheorem{corollary}[theorem]{Corollary}
\theoremstyle{remark}
\newtheorem{remark}[theorem]{Remark}
\numberwithin{equation}{section}
\begin{document}

\title{Sharp Markov-type Inequalities for Rational Functions on Several Intervals}

%    Information for first author
\author{M. A. Akturk}
%    Address of record for the research reported here
\address{Department of Engineering Sciences, Istanbul University, 34320 Istanbul, Turkey}
%    Current address
\curraddr{Department of Engineering Sciences, Istanbul University, 34320 Istanbul, Turkey}
\email{mehmetaliakturk@yandex.com}
%    \thanks will become a 1st page footnote.
\thanks{The authors were supported by Scientific and Technological Research Council of Turkey (TUB\.{I}TAK), joint project with Russian Foundation of Basic Research (RFBR), no.113F369.}

%    Information for second author
\author{A. Lukashov}
\address{Department of Mathematics, Fatih University, 34500 Istanbul, Turkey \\
              and Department of Mechanics and Mathematics, Saratov State University, 410012 Saratov, Russia}
\email{alukashov@fatih.edu.tr}
%\thanks{Support information for the second author.}

%    General info
\subjclass[2010]{41A17, 41A20}

\date{July 11, 2014 and, in revised form, Month Day, Year.}

%\dedicatory{This paper is dedicated to our advisors.}

\keywords{Inequalities in approximation, Approximation by rational functions}

\begin{abstract}
In this paper, we give sharp Rusak- and Markov-type inequalities for rational functions on several intervals when the system of intervals is a \textquotedblleft rational function inverse image\textquotedblright\, of an interval and those functions are large in gaps.
\end{abstract}

\maketitle

\section{Introduction}
In 1889 A. A. Markov proved that if $P_{n}(x)$ is a polynomial of degree $n$ and $\left\vert P_{n}(x)\right\vert \leq M$ on an interval of length $2L,$ then  $\left\vert P^{\prime}_{n}(x)\right\vert \leq n^2 ML$ on the same interval and factor $n^2$ is attained for (transformed) Chebyshev polynomials of the first kind. Reader may consult papers \cite{Boas1978,Shadrin2004} and books \cite{Borwein1995,Rahman2002} for more detailed story of this inequality and its generalizations.

It is clear that Markov inequality can not be generalized for the set of all rational functions of degree $n,$ although there are some Markov-type inequalities for rational functions with free poles \cite{Danchenko1996,Dolzhenko1963,Gonchar1961}. For rational functions with fixed denominators under additional restrictions on the poles sharp Markov-type inequalities on an interval were obtained by V.N. Rusak \cite{Rusak1979}.

Generalizations of Markov inequality onto polynomials and rational function for sets more complicated then one interval were considered in many papers \cite{Erdélyi2000,Plesniak2005,Privalov1983,Totik2002}, but to the best of our knowledge the sharp factor for Markov inequalities is not known even for the simplest case of polynomials of even degree on two symmetric intervals. Asymptotically sharp Markov-type inequalities for polynomials on several intervals were obtained in \cite{Borwein1981,Totik2011}. We start with one result of V.N. Rusak which was the base for sharp Markov type inequalities on one interval for rational functions with fixed poles.

\begin{theorem} \cite{Rusak1979}
If algebraic fraction of the from
\begin{eqnarray*}
r_{n}(x)=\frac{p_{n}(x)}{\sqrt{t_{2n}(x)}},
\end{eqnarray*}%
\begin{equation}
t_{2n}(x)=\prod\limits_{k=1}^{2n}\left( 1+a_{k}x\right) ,~~~-1\leq x\leq 1,
\end{equation}%
where $p_{n}(x)$ is an algebraic polynomial of degree at most $n$ with
complex (real) coefficients, numbers $a_{k}$ are either real or pairwise complex conjugate, $\left\vert
a_{k}\right\vert <1, k=1,\ldots,2n,$ satisfies the condition
\begin{equation}
\left\vert r_{n}(x)\right\vert \leq 1,~~~-1\leq x\leq 1,
\end{equation}
then the estimate
\begin{equation} \label{rus27}
\left\vert r_{n}^{\prime }(x)\right\vert \leq \left\{
\begin{array}{c}
\frac{\lambda _{n}(x)}{\sqrt{1-x^{2}}},~x_{1}\leq x\leq x_{n}, \\
\left\vert m_{n}^{\prime }(x)\right\vert ,~~~-1\leq x\leq x_{1},~x_{n}\leq x\leq 1,
\end{array}%
\right.
\end{equation}
is valid. Here $\left\{ x_{k}\right\} ,$ $-1<x_{1}<\ldots<x_{n}<1$ are zeros
of the cosine fraction $m_{n}(x)=\cos\frac{1}{2}\sum\limits_{k=1}^{2n} \arccos \left(\frac{x+a_{k}}{1+a_{k}x}\right)$, and $\lambda_n (x)= \frac{1}{2}\sum\limits_{k=1}^{2n}\frac{\sqrt{1-a_{k}^{2}}}{1+a_{k}x}.$ Equality sign in (\ref{rus27}) is attained only
for functions $r_{n}(x)\equiv \varepsilon m_{n}(x),~\left\vert \varepsilon \right\vert =1.$
\end{theorem}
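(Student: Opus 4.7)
The plan hinges on the phase function
\[
\Phi(x) := \frac{1}{2}\sum_{k=1}^{2n}\arccos\frac{x+a_{k}}{1+a_{k}x},
\]
so that $m_{n}(x)=\cos\Phi(x)$. First I would carry out the short calculation using
$1-\bigl(\tfrac{x+a_{k}}{1+a_{k}x}\bigr)^{2}=\tfrac{(1-a_{k}^{2})(1-x^{2})}{(1+a_{k}x)^{2}}$
(valid since $|a_{k}|<1$ forces $1+a_{k}x>0$ on $[-1,1]$) to establish $\Phi'(x)=-\lambda_{n}(x)/\sqrt{1-x^{2}}$, and hence
\[
m_{n}'(x)=\sin\Phi(x)\cdot\frac{\lambda_{n}(x)}{\sqrt{1-x^{2}}}.
\]
This already yields $\left\vert m_{n}'(x)\right\vert\le\lambda_{n}(x)/\sqrt{1-x^{2}}$, with equality exactly at the $n+1$ extrema of $m_{n}$ (where $\sin\Phi=0$). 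Since $\Phi$ is a monotone bijection from $[-1,1]$ onto $[0,n\pi]$, the zeros $x_{1}<\dots<x_{n}$ correspond to the $n$ values $\Phi(x_{j})\in\{(k-\tfrac{1}{2})\pi : 1\le k\le n\}$; the whole geometry mimics the Chebyshev case, which is recovered when all $a_{k}=0$.

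For the interior estimate on $[x_{1},x_{n}]$, the plan is a Markov--Bernstein--type quadrature/interpolation argument. Concretely, I would represent $r_{n}'(x_{0})$ as a finite linear combination $r_{n}'(x_{0})=\sum_{j=0}^{n}K_{j}(x_{0})\,r_{n}(\eta_{j})$ of the values at the $n+1$ equioscillation points $\eta_{0}<\dots<\eta_{n}$ of $m_{n}$, via an interpolation formula tailored to the rational class $p_{n}/\sqrt{t_{2n}}$ (an analogue of Gauss--Chebyshev quadrature in this setting). Applying the same formula to $m_{n}$, where $r_{n}(\eta_{j})=\pm 1$ with alternating signs, will compute $\sum_{j}\left\vert K_{j}(x_{0})\right\vert=\lambda_{n}(x_{0})/\sqrt{1-x_{0}^{2}}$. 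The triangle inequality together with $\left\vert r_{n}\right\vert\le 1$ then delivers the interior bound, and tracking the equality case through the triangle inequality pins the extremal down to $r_{n}\equiv\varepsilon m_{n}$, $\left\vert\varepsilon\right\vert=1$.

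The outer regions $[-1,x_{1}]$ and $[x_{n},1]$ will be the main obstacle, because there $\lambda_{n}(x)/\sqrt{1-x^{2}}$ is the wrong envelope---it blows up at $\pm 1$, while $\left\vert m_{n}'\right\vert$ stays finite---and the sharp bound instead equals $\left\vert m_{n}'(x)\right\vert$. My plan here is a Duffin--Schaeffer--type comparison: on each outer interval $m_{n}$ is monotone from $\pm 1$ at the endpoint to $0$ at $x_{1}$ or $x_{n}$, so for a fixed $x_{0}$ in such a region I would consider the auxiliary rational function $c\,m_{n}(x)-r_{n}(x)$ with $c=r_{n}(x_{0})/m_{n}(x_{0})$ (or a suitable perturbation of it), note that the interior equioscillation of $m_{n}$ already forces $n$ sign changes of this difference, and then apply Rolle's theorem together with a degree/zero count for rationals of the form $p_{n}/\sqrt{t_{2n}}$ to force $\left\vert r_{n}'(x_{0})\right\vert\le\left\vert m_{n}'(x_{0})\right\vert$. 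Carrying the counting argument cleanly across the transition zeros $x_{1}$ and $x_{n}$, where the envelope switches form, will be the most delicate step, and is where the sharpness of Rusak's inequality really lives.
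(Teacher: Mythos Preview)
The paper does not actually prove this statement; it is quoted from Rusak's book as background. However, the paper's proof of Theorem~\ref{thmar4} explicitly ``adopts Rusak's method'' and, specialised to $l=1$, \emph{is} a proof of the statement you are attempting. So the comparison below is against that argument.

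\medskip
\textbf{Interior region $[x_{1},x_{n}]$.} Your interpolation/quadrature plan is a legitimate route, but it is not Rusak's. Rusak (and the paper, via \cite[Theorem~5]{Lukashov2004} and \cite{Kalmykov2009}) instead proves the Bernstein--Szeg\H{o} type functional inequality
\[
\Bigl(\frac{r_{n}'(x)}{\lambda_{n}(x)/\sqrt{1-x^{2}}}\Bigr)^{2}+r_{n}(x)^{2}\le 1,
\]
with equality for $m_{n}$. This gives the interior bound in one line and also supplies the strict inequality $|r_{n}'(x_{k})|<|m_{n}'(x_{k})|$ at the zeros $x_{k}$ of $m_{n}$, which is the seed for the exterior argument. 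Your approach would need a separate device to obtain those strict inequalities.

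\medskip
\textbf{Exterior regions $[-1,x_{1}]\cup[x_{n},1]$.} Your Duffin--Schaeffer style comparison is in the right spirit, but there is a genuine gap. Rolle's theorem only gives a \emph{lower} bound on the number of zeros of the derivative of the comparison function; what you need is an \emph{upper} bound, namely that $m_{n}'-\lambda r_{n}'$ has at most $n-1$ zeros in $[-1,1]$. For polynomials that is a degree count, but here the difference is $p_{n}/\sqrt{t_{2n}}$ and its derivative has a numerator of degree $3n-1$, so a naive count allows far too many zeros. Rusak's key idea (reproduced in the paper) is to observe that the zeros of $\bigl(m_{n}-\lambda r_{n}\bigr)'$ coincide with the critical points of the genuine rational function
\[
R_{2n}(z)=\frac{\prod_{k=1}^{n}(z-z_{k})^{2}}{\prod_{j=1}^{2n}(z-\xi_{j})},
\]
where the $z_{k}$ are the (simple) zeros of $m_{n}-\lambda r_{n}$ in $(-1,1)$; one then invokes the B\^ocher--Walsh theorem \cite[p.~99]{Walsh1950} (zeros of $R_{2n}$ in $|z|<1$, poles in $|z|>1$, hence exactly $n-1$ finite critical points in $|z|\le 1$). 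This is precisely where the hypothesis $|a_{k}|<1$ is used, and it is the step your outline does not supply. Also note that Rusak compares $m_{n}-\lambda r_{n}$ with $0<\lambda<1$ (so the signs at the $n+1$ equioscillation points of $m_{n}$ are guaranteed) and only lets $\lambda\to 1$ at the end; your choice $c=r_{n}(x_{0})/m_{n}(x_{0})$ does not by itself force the alternation needed to place $n$ zeros.
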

 The main goal of the paper is to give maximal  (in a sense) possible extension of Rusak's inequality for rational functions on several intervals. As a consequence we give sharp Markov-type inequality for rational functions with an additional restriction.
Note that the equality for our Markov-type inequality is attained for rational functions which can be considered as an analogue of Chebyshev-Markov rational functions (their numerators are called also Bernstein-Szeg\H{o} orthogonal polynomials) on several intervals (see, for example \cite{Lebedev2003,Lukashov1998,Lukashov2004,Peherstorfer1993}). We would like to remark that general result of V. Totik \cite{Totik2011} contains $o(1)$ as $n\longrightarrow \infty,$ hence it says nothing for a fixed (maybe small) $n$.

\section{Results}

Let $\Re \left( \xi _{1},\ldots,\xi _{2n}\right)$ be the set of all \textquotedblleft\ rational functions \textquotedblright  of the form
\begin{equation} \label{arr}
r(x)=\frac{b_{0}x^{n}+b_{1}x^{n-1}+\ldots+b_{n}}{\sqrt{\rho _{\nu }(x)}},
\end{equation}%
$b_{0},\ldots,b_{n}\in\mathbb{C}$ and $\rho _{\nu }(x)=\prod\limits_{j=1}^{2n}\left( x-\xi _{j}\right)$ is a real polynomial of degree $\nu $ which is positive on $E=\bigcup\limits_{j=1}^{l}\left[ a_{2j-1},a_{2j}\right],$ $-1=a_{1}<a_{2}<\ldots<a_{2l}=1$ ($\xi _{j}$ might be equal to $\infty,$ then \: $\left( x-\xi _{j}\right)$ should be omitted)
and
\begin{eqnarray*}
\varpi _{E}\left( z,x\right) =\frac{\partial }{\partial x}\omega\left(
z,E\cap \left[ a_{1},x\right] ,\mathbb{C}\backslash E\right) ;\omega_{k}(\xi _{j})=\omega\left( \xi_{j},\left[ a_{2k-1},a_{2k}\right],\mathbb{C}\backslash E\right), k=1,\ldots,l.
\end{eqnarray*}
Here $\omega\left( z,G,\mathbb{C}\backslash E\right)$ is the harmonic measure of a set $G\subset E$ at a point $z\in\mathbb{C}\backslash E.$

Consider also a subclass $\Re^{*} \left( \xi _{1},\ldots,\xi _{2n}\right)$ of $\Re \left( \xi _{1},\ldots,\xi _{2n}\right)$ which consists of those functions $r \in \Re \left( \xi _{1},\ldots,\xi _{2n}\right)$, which satisfy $\left\vert r(x)\right\vert > \left\Vert r \right\Vert_{C(E)}$ for all $x \in [-1,1] \backslash E.$

\begin{theorem} \label{thmar4}
Suppose that $\sum\limits_{j=1}^{2n} \omega_{k}(\xi _{j})=2q_{k},~q_{k}\in
\mathbb{N},~k=1,\ldots,l,$ and $ \left\vert\xi _{j}\right\vert>1, j=1,\ldots,2n.$
Then for any $r\in \Re^{*} \left( \xi _{1},\ldots,\xi _{2n}\right), \left\Vert r \right\Vert_{C(E)}=1$ the inequality
\begin{equation} \label{special}
\left\vert r^{\prime }(x)\right\vert \leq \left\{
\begin{array}{c}
\gamma _{n}^{\prime }\left(x\right), x \in \widetilde{E}_{n}, \\
\left\vert m_{n}^{\prime }(x)\right\vert , x \in E \backslash \widetilde{E}_{n}
\end{array}%
\right.
\end{equation}
is valid, where
\begin{eqnarray*}
m_{n}(x) &=&\cos \left( \gamma _{n}\left( x\right) \right), \\
\gamma _{n}\left( x\right)&=&\frac{\pi }{2}\int\limits_{a_{1}}^{x}\sum\limits_{j=1}^{2n}\varpi _{E}\left( x,\xi _{j}\right) dx,\\
\widetilde{E}_{n} &=&\left[ x_{1},x_{q_{1}}\right] \cup \left[
x_{q_{1}},x_{q_{1}+q_{2}}\right] \cup \ldots\cup \left[x_{q_{1}+\ldots+q_{l-1}},x_{n}\right],
\end{eqnarray*}%
and $x_{1}<\ldots<x_{n}$ are zeros of $m_{n}$ (there are $q_{k}$ zeros on $\left[ a_{2k-1},a_{2k}\right], k=1,\ldots,l).$ 

For $r(x)\equiv \varepsilon m_{n}(x),~\left\vert \varepsilon \right\vert =1,$ inequality in (\ref{special}) is attained.

\end{theorem}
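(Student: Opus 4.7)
The plan is to adapt Rusak's one-interval argument by using the integer harmonic-measure condition $\sum_{j}\omega_{k}(\xi_{j})=2q_{k}$ to guarantee that $m_{n}(x)=\cos\gamma_{n}(x)$ is an honest real rational function of the form~(\ref{arr}) on $E$, equioscillating exactly $q_{k}$ times between $\pm 1$ on each component $[a_{2k-1},a_{2k}]$ and satisfying $|m_{n}|>1$ throughout each gap $(a_{2k},a_{2k+1})$. Consequently $m_{n}\in\Re^{*}(\xi_{1},\ldots,\xi_{2n})$ with $\|m_{n}\|_{C(E)}=1$, which immediately shows that equality in~(\ref{special}) is attained by $m_{n}$. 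Under the same integer hypothesis, the companion $s_{n}(x)=\sin\gamma_{n}(x)$ is again a rational function of the allowed form on $E$, and the one-parameter family of phase-shifted Chebyshev--Markov fractions
\begin{equation*}
M_{\phi}(x)=m_{n}(x)\cos\phi+s_{n}(x)\sin\phi=\cos\bigl(\gamma_{n}(x)-\phi\bigr),\qquad \phi\in\mathbb{R},
\end{equation*}
all lie in $\Re^{*}$ with unit sup-norm on $E$ and inherit the equioscillation pattern of $m_{n}$.

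I would first reduce to real coefficients $b_{j}$ by replacing $r$ with $\operatorname{Re}(e^{-i\alpha}r)$, where $\alpha=\arg r'(x_{0})$; this preserves $\Re^{*}$ membership, the sup-norm on $E$, and $|r'(x_{0})|$. For $x_{0}\in\widetilde{E}_{n}$ I then choose $\phi_{0}$ so that $M_{\phi_{0}}(x_{0})=r(x_{0})$ and $M_{\phi_{0}}'(x_{0})$ has the same sign as $r'(x_{0})$, and form the auxiliary rational function $w(x)=r(x)M_{\phi_{0}}'(x_{0})-M_{\phi_{0}}(x)r'(x_{0})+C$, where $C$ is chosen so that $w(x_{0})=0$; by construction $w'(x_{0})=0$ as well. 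Since $w(x)\sqrt{\rho_{\nu}(x)}$ is a polynomial of degree at most $n$, the heart of the argument is a sign-change count that forces $w$ to have at least $n$ real zeros on $E\setminus\{x_{0}\}$ by comparing the alternation patterns of $r$ and $M_{\phi_{0}}$ at the peaks of $M_{\phi_{0}}$. Combined with the double zero at $x_{0}$, this would produce more than $n$ zeros in contradiction with the degree bound, ruling out $|r'(x_{0})|>|M_{\phi_{0}}'(x_{0})|=\gamma_{n}'(x_{0})\sqrt{1-r(x_{0})^{2}}$; optimizing over $\phi_{0}$ and using $\sqrt{1-r(x_{0})^{2}}\le 1$ delivers $|r'(x_{0})|\le\gamma_{n}'(x_{0})$, with equality by $m_{n}$ at its zeros. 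The case $x_{0}\in E\setminus\widetilde{E}_{n}$ is handled similarly with $M_{\phi_{0}}$ replaced by $\pm m_{n}$ directly, since near the end of each component the extremal phase admissible above is no longer free but dictated by the boundary oscillation, yielding $|r'(x_{0})|\le|m_{n}'(x_{0})|$.

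The main obstacle is the zero-counting in the multi-interval setting, where the $n$ common peaks together with the $l-1$ sign transitions across the gaps would naively produce up to $n+l-1$ candidate sign changes of $w$ on $[-1,1]$, apparently exceeding the degree-$n$ budget even before adding the double zero at $x_{0}$. Exactly here the large-in-gaps hypothesis defining $\Re^{*}$ is indispensable: in each gap $(a_{2k},a_{2k+1})$ both $|r|$ and $|M_{\phi_{0}}|$ strictly exceed $1$, and a careful sign analysis shows that the $l-1$ inter-component transitions of $w$ correspond to zeros located inside the gaps rather than on $E$, which are absorbed into the polynomial degree bookkeeping and do not inflate the count on $E$. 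Making this absorption rigorous, and verifying that the extremal phase $\phi_{0}$ is admissible precisely when $x_{0}\in\widetilde{E}_{n}$ with fallback to $\pm m_{n}$ on the complement, is the crux of the argument; the integer condition $\sum_{j}\omega_{k}(\xi_{j})=2q_{k}$ provides the arithmetic foundation that makes the entire alternation bookkeeping self-consistent across all $l$ components.
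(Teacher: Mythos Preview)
Your sketch has two structural errors that block the argument. First, the claim that $s_{n}(x)=\sin\gamma_{n}(x)$ is ``again a rational function of the allowed form'' is false for $l\ge 1$: the sine Chebyshev--Markov fraction is $\sqrt{-H(x)}\,v_{n-l}(x)/\sqrt{\rho_{\nu}(x)}$ with $H(x)=\prod_{j=1}^{2l}(x-a_{j})$, so its numerator is not a polynomial and $s_{n}\notin\Re(\xi_{1},\ldots,\xi_{2n})$. Hence for generic $\phi$ your phase-shifted $M_{\phi}$ is not in the class, and the assertion that $w(x)\sqrt{\rho_{\nu}(x)}$ is a polynomial of degree $\le n$ collapses. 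It collapses for a second reason as well: the additive constant $C$ contributes $C\sqrt{\rho_{\nu}(x)}$, which is not a polynomial unless $\rho_{\nu}$ is a perfect square. The interior bound you are trying to manufacture, $|r'(x)|\le\gamma_{n}'(x)\sqrt{1-r^{2}(x)}$, is exactly Lukashov's Bernstein-type inequality (Izv.\ Math.\ 2004, Theorem~5), and the paper simply quotes it; reproving it needs different machinery than a naive phase shift.

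More seriously, the boundary case $x\in E\setminus\widetilde E_{n}$ is the crux of the theorem, and ``handled similarly with $\pm m_{n}$'' misses the essential mechanism. The paper works not with a linear combination of $m_{n}$ and $r$ but with the \emph{derivative} $m_{n}'-\lambda r'$, and the key step is the B\^ocher--Walsh theorem on critical points: writing $(m_{n}-\lambda r)^{2}$ as a rational function $R_{2n}$ with all zeros in $(-1,1)$ and all poles in $|z|>1$, one gets that $R_{2n}'$ (hence $m_{n}'-\lambda r'$) has at most $n-1$ zeros in $|z|\le 1$. Of these, $n-l$ are pinned inside $\widetilde E_{n}$ via the strict Bernstein inequality at the zeros $x_{k}$ of $m_{n}$, and $l-1$ are forced into the gaps $(a_{2k},a_{2k+1})$ by a Rolle-type argument; this exhausts the budget and forbids any sign change of $m_{n}'-r'$ on the components of $E\setminus\widetilde E_{n}$. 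Your zero count for $w$ cannot substitute for this: without a bound on the zeros of the \emph{derivative} you have no control over $r'$ versus $m_{n}'$ near the endpoints, and the ``absorption'' of gap zeros you describe pertains to the wrong function.
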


\begin{proof}
It is sufficient (compare \cite[proof of Corollary 5.1.5]{Borwein1995}) to prove Theorem \ref{thmar4} only for the case when numerator $p_{n}(x)$
has real coefficients.
Firstly consider the case $x \in \widetilde{E}_{n}.$
Then by \cite[Theorem 5]{Lukashov2004}
\begin{equation}
\left\vert r^{\prime }\left( x\right) \right\vert  \leq\sqrt{1-r^{2}\left(
x\right) } \gamma _{n}^{\prime }\left( x\right) \leq \gamma _{n}^{\prime }\left( x\right)  
\end{equation}

Next we consider the case $x\in E\backslash \widetilde{E}_{n}.$ Here we adopt Rusak's method \cite[p. 58-60]{Rusak1979}.

Let $\left\{ y_{j}\right\} _{j=1}^{n+l}$ be zeros of the sine
Chebyshev-Markov fraction $\frac{\sqrt{-H(x)}v_{n-l}(x)}{\sqrt{\rho _{\nu }(x)}},$ where $v_{n-l}(x)$ is determined from
\begin{equation*}
m_{n}^{2}(x)-\frac{H(x)v_{n-l}^{2}(x)}{\rho _{\nu }(x)}=1,
\end{equation*}%
and $H(x)=\prod\limits_{j=1}^{2l}\left( x-a_{j}\right) $ (see \cite[Theorem 3]{Lukashov2004}). Note that $m_{n}$ is a function of the form (\ref{arr}) and
\begin{eqnarray} \label{mar11}
m_{n}(y_{k})&=&(-1)^{\mu +j},\quad y_k\in[a_{2j-1},a_{2j}],\quad k=1,\ldots n+l \\
k&=&\sum\limits_{i=1}^{j-1}q_{i}+\mu+j-1,~~~  1\leq \mu \leq q_{j}+1,~ ~~~ j=1,\ldots,l.
\end{eqnarray}
If $r(x)$ is different from $m_{n}(x)$ then for $0<\lambda <1$ the difference $%
m_{n}(x)-\lambda $ $r(x)$ takes the same signs as $m_{n}$ at points $%
\left\{ y_{j}\right\} _{j=1}^{n+l}.$ Hence it has simple zeros on the set $E$
and can be written as
\begin{equation*}
m_{n}(x)-\lambda r(x)=C\frac{\prod\limits_{k=1}^{n}(x-z_{k})}{\sqrt{%
\prod\limits_{j=1}^{2n}\left( x-\xi _{j}\right) }}.
\end{equation*}%
Next, the derivative of this difference is equal to
\begin{equation} \label{mar12}
m_{n}^{\prime }(x)-\lambda r^{\prime }(x)=\frac{C}{2}\left(
m_{n}(x)-\lambda r(x)\right) \left[ 2\sum\limits_{k=1}^{n}\frac{1}{%
x-z_{k}}-\sum\limits_{j=1}^{2n}\frac{1}{x-\xi _{j}}\right].
\end{equation}%
Zeros of the difference $m_{n}^{\prime }(x)-\lambda r^{\prime }(x)$
coincide with zeros of square brackets in (\ref{mar12}), and they are zeros of the
logarithmic derivative of the rational function $R_{2n}(x)={\prod%
\limits_{k=1}^{n} \left( x-z_{k}\right) ^{2}}/{\rho _{\nu }(x)}.$

The condition $\left\vert \xi _{j}\right\vert >1,~j=1,\ldots,2n$ guarantees
that all poles of the rational function $R_{2n}(z)$ are contained in region $%
\left\vert z\right\vert \geq r>1,$ all zeros $\left\{ z_{k}\right\}$ of the function $\left\{R_{2n}(z)\right\} $ lie in the interval $(-1,1)\supset E.$

Bocher-Walsch Theorem \cite[Corollary 1 on p. 99]{Walsh1950} implies that $R_{2n}^{\prime }(z)$ has $n-1$ zeros in $\left\vert z\right\vert \leq 1$ and
other zeros are in $\left\vert z\right\vert \geq r>1.$ In the same way
because of (\ref{mar12}) the zeros of the difference $m_{n}^{\prime }(x)-\lambda
r^{\prime }(x)$ are located. So the function $m_{n}^{\prime }(x)-\lambda
r^{\prime }(x),~0<\lambda <1$ has no more than $n-1$ zeros in $(-1,1)$
and other zeros are in $\left\vert z\right\vert \geq r>1.$

By the continuity as $\lambda $ tends to $1$ we obtain the function $%
m_{n}^{\prime }(x)-r^{\prime }(x)$ has at most $n-1$ zeros on $\left[-1,1\right].$

From the identity%
\begin{equation*}
\left( \frac{m_{n}^{\prime }(x)}{\gamma _{n}^{\prime }(x)}\right)
^{2}+m_{n}^{2}(x)=1,
\end{equation*}%
and the inequality (with $\leq $sign it was given in \cite[Theorem 5]{Lukashov2004}, for $r\neq
\pm m_{n}$ strict inequality was obtained in \cite{Kalmykov2009})
\begin{equation*}
\frac{\left( r^{\prime }(x)\right) ^{2}}{\gamma _{n}^{\prime }(x)}%
+r^{2}(x)<1,
\end{equation*}%
we get%
\begin{equation*}
\left\vert r^{\prime }(x_{k})\right\vert < \left\vert m_{n}^{\prime }(x_{k})\right\vert.
\end{equation*}%

Since
\begin{equation*}
m_{n}^{\prime }(x) =-\sin \gamma _{2n}(x)\,\gamma _{n}^{\prime }(x),
\end{equation*}%
has alternate sign at points $\left\{ x_{k}\right\} _{k=1}^{n},$
the relation $sign\left\{ m_{n}^{\prime }(x_{k})-r^{\prime }(x_{k})\right\}
=\left( -1\right) ^{k}$, $k=1,\ldots,n$ holds. On the interval $\left(
x_{1},x_{q_{1}}\right) $ there are $q_{1}-1$ zeros of $m_{n}^{\prime }(x)-r^{\prime }(x),$ on $\left( x_{q_{1}+1},x_{q_{1}+q_{2}}\right) $ there
are $q_{2}-1$ zeros, and so on, therefore $n-l$ zeros of
the difference $m_{n}^{\prime }(x)-r^{\prime }(x)$ are on the set $\widetilde{E}_{n}.$
Since $m_{n}(x)-\lambda r(x)$ has the same value at $a_{2k}$ and $a_{2k+1}, k=1,\ldots,l-1,$ there is at least one zero of $m_{n}^{\prime }(x)-\lambda r^{\prime }(x)$ on $\left( a_{2k},a_{2k+1}\right), k=1,\ldots,l-1. $
So by the continuity as $\lambda$ tends to $1$ we obtain that $m_{n}^{\prime }(x)- r^{\prime }(x)$
 preserves sign inside intervals from $E\backslash \widetilde{E}_{n}.$

In particular it follows from $m_{n}^{\prime }(a_{1})-r^{\prime
}(a_{1})\leq0$ that $m_{n}^{\prime }(x)-r^{\prime }(x)\leq 0$ for $a_{1}\leq
x\leq x_{1}.$ Analogous consideration is valid for the algebraic fraction $%
-r(x).$ Therefore we obtain inequality $m_{n}^{\prime }(x)+r^{\prime
}(x)\leq0$ for $a_{1}\leq x\leq x_{1},$ and it gives the estimate $\left\vert
r^{\prime }(x)\right\vert \leq\left\vert m_{n}^{\prime }(x)\right\vert
,~a_{1}\leq x\leq x_{1}.$ Analogously other intervals from $E\backslash \widetilde{E}_{n}$ are considered.

\end{proof}

\begin{theorem} \label{thmar5}
Suppose that $\sum\limits_{j=1}^{2n} \omega_{k}(\xi _{j})=2q_{k},~q_{k}\in
\mathbb{N},~k=1,\ldots,l,$ and $\xi _{j} \in \overline{\mathbb{R}}, \left\vert\xi _{j}\right\vert>1, j=1,\ldots,2n.$
Then for any $r\in \Re^{*} \left( \xi _{1},\ldots,\xi _{2n}\right), \left\Vert r \right\Vert_{C(E)}=1$ the inequality
\begin{equation}
\left\Vert r^{\prime} \right\Vert_{C(E)} \leq \left\Vert m_{n}^{\prime}\right\Vert_{C(E)}
\end{equation}
holds.

\end{theorem}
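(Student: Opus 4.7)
The plan is to combine the pointwise bounds of Theorem \ref{thmar4} with a separate argument establishing $\gamma_n'\le\|m_n'\|_{C(E)}$ on $\widetilde E_n$. Applying Theorem \ref{thmar4} to $r$ gives $|r'(x)|\le\gamma_n'(x)$ for $x\in\widetilde E_n$ and $|r'(x)|\le|m_n'(x)|\le\|m_n'\|_{C(E)}$ for $x\in E\setminus\widetilde E_n$, so the required inequality is immediate on $E\setminus\widetilde E_n$ and the problem reduces to bounding $\gamma_n'$ pointwise by $\|m_n'\|_{C(E)}$ on $\widetilde E_n$.

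Differentiating the identity $m_n=\cos\gamma_n$ yields $(m_n')^{2}=(1-m_n^{2})(\gamma_n')^{2}$, so at every zero $x_k$ of $m_n$ one has $|m_n'(x_k)|=\gamma_n'(x_k)$ and therefore $\gamma_n'(x_k)\le\|m_n'\|_{C(E)}$ for every $k=1,\dots,n$. The endpoints of each component $[x_{s_k+1},x_{s_k+q_k}]$ of $\widetilde E_n$ (with $s_k=q_1+\dots+q_{k-1}$) are among these zeros. It therefore suffices to show that, on each such component, $\gamma_n'$ attains its maximum at one of the endpoints; this would yield $\max_{\widetilde E_n}\gamma_n'=\max_k\gamma_n'(x_k)\le\|m_n'\|_{C(E)}$ and finish the proof.

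To prove this last claim I would establish that $\gamma_n'$ is convex on each interval $(a_{2k-1},a_{2k})$ of $E$. Since $\gamma_n'\to+\infty$ at each endpoint $a_j$ (an inverse square-root singularity of the density of harmonic measure), convexity together with this blow-up forces the restriction of $\gamma_n'$ to the compact subinterval $[x_{s_k+1},x_{s_k+q_k}]\subset(a_{2k-1},a_{2k})$ to take its maximum at an endpoint of that subinterval. Writing $\gamma_n'(x)=\tfrac{\pi}{2}\sum_j\varpi_E(x,\xi_j)$, convexity can be checked summand by summand. In the model case $l=1$ one has $\varpi_{[-1,1]}(x,\xi)=\sqrt{\xi^{2}-1}/\bigl(\pi(\xi-x)\sqrt{1-x^{2}}\bigr)$, whence $(\log\varpi)''=(\xi-x)^{-2}+(1+x^{2})/(1-x^{2})^{-2}>0$, so $\varpi_{[-1,1]}(\cdot,\xi)$ is log-convex, hence convex; the case $\xi=\infty$ is analogous. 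This is essentially how Rusak's Markov-type inequality is derived from his Bernstein-type estimate on a single interval.

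The main obstacle is to extend this convexity to the multi-interval setting $l\ge 2$. For real $\xi_j$ the density admits a representation of the form $|T_j(x)|/\bigl(\pi|\xi_j-x|\sqrt{-H(x)}\bigr)$, where $T_j$ is a real polynomial of degree $l-1$ with exactly one zero $t_{j,i}$ in every gap $(a_{2i},a_{2i+1})$. A direct computation of $(\log\varpi_E(\cdot,\xi_j))''$ produces \emph{negative} contributions $-\sum_i(x-t_{j,i})^{-2}$ coming from the zeros of $T_j$, and these must be dominated by the positive contributions $\tfrac{1}{2}\sum_j(x-a_j)^{-2}$ from the endpoints of $E$ together with $(\xi_j-x)^{-2}$ from the pole. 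Carrying out this pointwise comparison on each component of $E$, exploiting the specific interlacing of the $t_{j,i}$ with the gaps, is the only genuinely delicate step; once convexity of every summand is in hand, the argument above closes immediately.
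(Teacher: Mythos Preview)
Your strategy is exactly the paper's: reduce via Theorem~\ref{thmar4} to showing $\max_{\widetilde E_n}\gamma_n'\le\|m_n'\|_{C(E)}$, observe that $\gamma_n'(x_k)=|m_n'(x_k)|$ at the zeros of $m_n$, and conclude by convexity of $\gamma_n'$ on each component of $E$ that the maximum over $\widetilde E_n$ is attained at some $x_k$. The paper's proof is only a few lines because the convexity step is not argued but simply quoted: it is the main theorem of Benko, Dragnev, and Totik \cite{Benko2012}, which shows that for real $\xi\in\overline{\mathbb R}\setminus E$ the density $\varpi_E(\cdot,\xi)$ is convex on every component of $E$. This is precisely why Theorem~\ref{thmar5} carries the extra hypothesis $\xi_j\in\overline{\mathbb R}$ absent from Theorem~\ref{thmar4}.

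The gap you flag is therefore real, and it is not a routine computation: the negative terms $-\sum_i (x-t_{j,i})^{-2}$ in your expression for $(\log\varpi_E)''$ genuinely can defeat log-convexity of individual summands, and the proof in \cite{Benko2012} does not proceed by the termwise log-convexity route you outline. So your plan is correct, your identification of the crux is accurate, but the ``delicate step'' you leave open is a theorem in its own right; replace your attempted direct verification by a citation of \cite{Benko2012} and the proof is complete. (Incidentally, in your one-interval display $(1+x^2)/(1-x^2)^{-2}$ should read $(1+x^2)/(1-x^2)^{2}$.)
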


\begin{proof}

Firstly consider the case $x \in \widetilde{E}_{n}.$
Then by Theorem \ref{thmar4}
\begin{eqnarray*}
\left\vert r^{\prime }\left( x\right) \right\vert  &\leq&
 \gamma _{n}^{\prime }\left( x\right)   \\
&\leq &\underset{x\in \widetilde{E}_{n}}{\max } \gamma
_{n}^{\prime }\left( x\right).
\end{eqnarray*}
By \cite{Benko2012} $\gamma _{n}^{\prime }\left( x\right)$ is convex hence
\begin{equation*}
\underset{x\in \widetilde{E}_{n} }{\max }\left( \gamma
_{n}^{\prime }\left( x\right) \right) =\underset{k=1,\ldots,n}{\max }\left( \gamma _{n}^{\prime }\left(
x_{k}\right) \right)
\end{equation*}
and we get by
\begin{equation*}
 \gamma _{n}^{\prime }\left(x_{k}\right)=\left\vert m_{n}^{\prime }\left(x_{k}\right) \right\vert, k=1,\ldots,n,
\end{equation*}
 the desired result,
\begin{equation*}
\left\vert r^{\prime}(x)\right\vert \leq \left\Vert m_{n}^{\prime}\right\Vert_{C(E)}, x\in \widetilde{E}_{n},
\end{equation*}
and application of second inequality in  Theorem \ref{thmar4} finishes the proof.

\end{proof}

\begin{corollary} \label{cor}
Let $\Pi _{n}$ denote the set of algebraic polynomials with real coefficients of degree at most $n$.
If $0<a<b,$ n is even and  $p_{n}\in \Pi^{*} _{n}$ where   $\Pi^{*} _{n}$ is the subclass of $\Pi _{n}$ which consists of polynomials $p_{n}\in \Pi _{n}$ such that $ p_n(x)> \left\Vert p_{n}\right\Vert _{\left[
-b,-a\right] \cup \left[ a,b\right] }$ for $ x\in (-a,a), $ then the inequality
\begin{equation*}
\left\Vert p_{n}^{\prime }\right\Vert _{\left[ -b,-a\right] \cup \left[ a,b%
\right] }\leq \frac{n^{2}b}{b^{2}-a^{2}}\left\Vert p_{n}\right\Vert _{\left[
-b,-a\right] \cup \left[ a,b\right] }.
\end{equation*}
is valid. It is attained for polynomials
\begin{equation*}
S_{n}(x)=T_{n/2}\left( \frac{2x^2-b^2-a^2}{b^2-a^2}\right),
\end{equation*}
where $T_{n}$ is the $n^{th}$ Chebyshev polynomial $\left( T_{n}=\cos(n\arccos(x))\right).$
\end{corollary}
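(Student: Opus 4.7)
The strategy is to cast the corollary as a special instance of Theorem \ref{thmar5} after an affine normalization and the observation that every real polynomial of degree at most $n$ sits inside the class $\Re(\xi_1,\ldots,\xi_{2n})$ as the degenerate case in which all $\xi_j=\infty$ (so the denominator in (\ref{arr}) is an empty product and $r(x)$ is an honest polynomial). Setting $y=x/b$ and $c=a/b\in(0,1)$ sends $[-b,-a]\cup[a,b]$ to the symmetric pair $E'=[-1,-c]\cup[c,1]$. Writing $\tilde p_n(y)=p_n(by)$, one has $\|\tilde p_n\|_{C(E')}=\|p_n\|_{C(E)}$ and $\|\tilde p_n'\|_{C(E')}=b\,\|p_n'\|_{C(E)}$, while $\tilde p_n\in\Pi_n^*$ sits inside $\Re^*(\infty,\ldots,\infty)$ because positivity in $(-a,a)$ implies the absolute-value condition that defines $\Re^*$.

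Next I would verify the hypotheses of Theorem \ref{thmar5}. By the reflection symmetry of $E'$, the harmonic measure of each component at infinity is $\omega_k(\infty)=1/2$, so $\sum_{j=1}^{2n}\omega_k(\xi_j)=n$ for $k=1,2$; the integrality requirement $\sum\omega_k(\xi_j)=2q_k\in2\mathbb{N}$ therefore forces $n$ even (our assumption) and yields $q_1=q_2=n/2$, while $|\xi_j|>1$ is vacuous at infinity. Theorem \ref{thmar5} then delivers
\begin{equation*}
\|\tilde p_n'\|_{C(E')}\leq \|m_n'\|_{C(E')}\,\|\tilde p_n\|_{C(E')},
\end{equation*}
and it only remains to evaluate $\|m_n'\|_{C(E')}$. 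The key identification is that, in the symmetric all-poles-at-infinity situation, the Chebyshev--Markov function is exactly the rescaled classical Chebyshev polynomial
\begin{equation*}
m_n(y)=T_{n/2}\!\left(\frac{2y^2-1-c^2}{1-c^2}\right),
\end{equation*}
because this is a degree-$n$ polynomial of sup-norm one on $E'$ with $n/2$ equioscillations on each component, a property that characterizes $m_n$ uniquely. The chain rule together with $|T_{n/2}'(\pm1)|=(n/2)^2$ yields $|m_n'(\pm1)|=n^2/(1-c^2)$; the same computation at $y=\pm c$ (where the inner argument equals $-1$) gives only $n^2c/(1-c^2)$. Unscaling by the factor $1/b$ produces the advertised constant $n^2b/(b^2-a^2)$, and $S_n(x)=m_n(x/b)$ attains equality.

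\textbf{Main obstacle.} The delicate point is not the invocation of Theorem \ref{thmar5} but the claim $\|m_n'\|_{C(E')}=n^2/(1-c^2)$. The second alternative in (\ref{special}) applies on $E'\setminus\widetilde{E}_n$, which contains the four endpoints $\pm1,\pm c$, whereas on $\widetilde{E}_n$ the upper bound is $\gamma_n'$. One must therefore rule out a larger value of $\gamma_n'$ at the interior zeros $x_k$ (handled by the convexity of $\gamma_n'$ invoked in the proof of Theorem \ref{thmar5}, which reduces $\max_{\widetilde{E}_n}\gamma_n'$ to $\max_k|m_n'(x_k)|$) and to compare these with the endpoint values obtained above, so that the overall maximum is realized at $y=\pm1$. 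Once this localization is secured, everything else is routine.
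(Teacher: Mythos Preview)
Your proposal is correct and follows essentially the same route as the paper: specialize Theorem~\ref{thmar5} to the case $\xi_1=\cdots=\xi_{2n}=\infty$, identify $m_n$ with $S_n$, and read off $\|m_n'\|_{C(E)}$. The paper's own proof is even terser---it simply records the explicit density $\varpi_E(x,\infty)=\frac{1}{\pi}\,|x|\big/\sqrt{(b^2-x^2)(x^2-a^2)}$ and the identification $m_n=S_n$, leaving the evaluation of $\|S_n'\|$ (and the rescaling to $[-1,1]$ that you carry out carefully) entirely to the reader.

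Your ``main obstacle'' is in fact dispatched more easily than you suggest, with no need to invoke convexity of $\gamma_n'$ or to compare values at the interior zeros $x_k$: since $m_n(y)=T_{n/2}(u(y))$ with $u(y)=(2y^2-1-c^2)/(1-c^2)$, the chain rule together with the classical Markov inequality $|T_{n/2}'(u)|\le (n/2)^2$ on $[-1,1]$ gives
\[
|m_n'(y)|\;=\;|T_{n/2}'(u(y))|\cdot\frac{4|y|}{1-c^2}\;\le\;\Bigl(\frac{n}{2}\Bigr)^2\cdot\frac{4}{1-c^2}\;=\;\frac{n^2}{1-c^2}
\]
for every $y\in E'$, with equality precisely at $y=\pm 1$.
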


\begin{proof}
It is sufficient to note that for $E=\left[ -b,-a\right] \cup \left[ a,b\right].$
\begin{equation*}
\varpi _{E}\left( x,\infty\right)=\frac{1}{\pi}\frac{\left\vert x\right\vert}{\sqrt{(b^2-x^2)(x^2-a^2)}},
\end{equation*}
and for $\xi _{1}=\ldots=\xi _{2n}=\infty, n$ is even, $m_{n}(x)=S_{n}(x).$

\end{proof}

\begin{remark}
Corollary \ref{cor} complements \cite[Theorem 2]{Borwein1981}.
\end{remark}

\begin{remark}
The condition $r\in \Re^{*} \left( \xi _{1},\ldots,\xi _{2n}\right)$ is essential for validity of Theorem \ref{thmar4}. Indeed, let $E=\left[ -1,-a\right] \cup \left[ a,1\right], \rho_{ \nu }(x)=1 $.
Here
\begin{equation*}
m_{4}(x)=\frac{8x^4- 8x^2 (1+a^2)+1+6a^2+a^4}{(1-a^2)^2}
\end{equation*}
and
\begin{equation*}
m_{4}^{\prime }(a)=-\frac{16a}{1-a^2}.
\end{equation*}
But for the Chebyshev polynomial $T_{3}(x)=4x^3-3x$ we have
\begin{equation*}
T_{3}^{\prime }(a)=12a^2-3,
\end{equation*}
hence
\begin{equation*}
\left\vert T_{3}^{\prime }(a)\right\vert > \left\vert m_{4}^{\prime }(a) \right\vert
\end{equation*}
for sufficiently small $a$. We don't know if the inequality in Theorem \ref{thmar5} is valid for $r\in \Re \left( \xi _{1},\ldots,\xi _{2n}\right)\backslash\Re^{*} \left( \xi _{1},\ldots,\xi _{2n}\right) $ or not.
\end{remark}

\begin{remark}
The main result of \cite{Akturk2014} which corresponds to Theorem 2.1, is valid under additional supposition
$\left\vert r(x)\right\vert >\left\Vert r \right\Vert_{C(E)}$ for all $x \in conv(E) \backslash E$.
\end{remark}

\begin{remark}
The supposition $\left(\left\vert \xi _{j}\right\vert>1\right)$ is essential even for the case of one interval, as the example (taken from \cite{Rusak1979})
$r\left( x\right)=\frac{72x^2-5x}{100x^2+1}$ shows (here $\left\Vert r^{\prime }\right\Vert _{\left[ -1,1\right]}\geq r^{\prime }(1)>\left\Vert m_{2}^{\prime }\right\Vert _{\left[ -1,1\right]}, m_{2}(x)=\frac{102x^2-1}{100x^2+1}, \left\Vert r\right\Vert _{\left[ -1,1\right]}\leq 1).$

\end{remark}

\paragraph{{\bf}ACKNOWLEDGEMENTS}
The authors are deeply grateful to reviewers and to Professor V. Totik for careful reading of the manuscript and remarks which helped to improve the text and correct mistakes.

\bibliographystyle{amsplain}

\end{document}